\documentclass[letterpaper,10pt,conference]{ieeeconf}

\IEEEoverridecommandlockouts
\overrideIEEEmargins
%%%%%%%%%%%%%%%%%%%%%%%%%%%%%%%%%%%%%%%%%%%%%%
\newtheorem{theorem}{Theorem}[section]

\newtheorem{remark}[theorem]{Remark}

\usepackage{xurl}
\usepackage{csquotes}
\usepackage{dirtytalk}
\usepackage{algorithm,algorithmic}
\usepackage{amssymb}
\usepackage{graphicx}
\usepackage[cmex10]{amsmath}
\usepackage{array}
\usepackage{mdwtab}
\usepackage{fixltx2e}
\usepackage{graphics}
\usepackage{epsfig}
\usepackage{times}
\usepackage{amsmath}
\usepackage{amssymb}
\usepackage{url}
\usepackage{lscape}
\usepackage{color}
\usepackage{epsfig}
\usepackage{cite}
\usepackage{balance}
\usepackage{dblfloatfix}
\usepackage{hyperref}
\usepackage{caption}
\usepackage{subcaption}

\allowdisplaybreaks

\title{\LARGE \bf A Guaranteed-Stable Neural Network Approach\\ for Optimal Control of Nonlinear Systems}

\author{Anran Li, John P. Swensen, and Mehdi Hosseinzadeh,~\IEEEmembership{Senior Member,~IEEE}
\thanks{The work of Mehdi Hosseinzadeh was supported by the WSU Voiland College of Engineering and Architecture through a start-up package.}
\thanks{The authors are with the School of Mechanical and Materials Engineering, Washington State University, Pullman, WA 99164, USA (email: anran.li@wsu.edu, john.swensen@wsu.edu, mehdi.hosseinzadeh@wsu.edu).}
}

\begin{document}

\maketitle
\thispagestyle{empty}
\pagestyle{empty}

\begin{abstract}
A promising approach to optimal control of nonlinear systems involves iteratively linearizing the system and solving an optimization problem at each time instant to determine the optimal control input. Since this approach relies on online optimization, it can be computationally expensive, and thus unrealistic for systems with limited computing resources. One potential solution to this issue is to incorporate a Neural Network (NN) into the control loop to emulate the behavior of the optimal control scheme. Ensuring stability and reference tracking in the resulting NN-based closed-loop system requires modifications to the primary optimization problem. These modifications often introduce non-convexity and nonlinearity with respect to the decision variables, which may surpass the capabilities of existing solvers and complicate the generation of the training dataset. To address this issue, this paper develops a Neural Optimization Machine (NOM) to solve the resulting optimization problems. The central concept of a NOM is to transform the optimization challenges into the problem of training a NN. Rigorous proofs demonstrate that when a NN trained on data generated by the NOM is used in the control loop, all signals remain bounded and the system states asymptotically converge to a neighborhood around the desired equilibrium point, with a tunable proximity threshold. Simulation and experimental studies are provided to illustrate the effectiveness of the proposed methodology.
\end{abstract}

% Keywords: neural network-based control, nonlinear systems, optimization, neural optimization machine

% \address[WSU]{School of Mechanical and Materials Engineering, Washington State University, Pullman, WA 99164, USA}

\section{Introduction}\label{sec:Intro}

Nonlinear systems are a common and significant aspect of real-world control problems. Achieving control objectives in nonlinear systems requires sophisticated control strategies that can accurately capture and manage the nonlinearities inherent in these systems. One possible approach to deal with nonlinear systems is to use nonlinear control techniques (see, e.g.,  \cite{KhalilBook,SastryBook,Rawlings1994,VidyasagarBook}); however, guaranteeing stability at all operating points with nonlinear control techniques can be challenging \cite{Hunt2011NeuralNE,Sinha2021AdaptiveRM}. Another approach is to, first, use an appropriate linearization technique (see, e.g, \cite{KhalilBook,IsidoriBook}) to linearize the system, and then utilize well-established theories on linear control system (see, e.g., \cite{OgataBook,ChenBook}). A different approach is to linearize the nonlinear system at each time instant, compute the control input for the linearized system, and repeat this procedure at the next time instant; examples are iterative Linear Quadratic Regulator (LQR) \cite{Prasad2014,Boby2014,Chen2017} and nonlinear Model Predictive Control (MPC) \cite{Schwedersky2022,Igarashi2020,Berberich2022}. Despite being promising, this approach requires computational capabilities that may not be realistic for systems with limited computational resources.

One possible approach to address this challenge is to use Neural Networks (NN) in the control loop and in combination with Lyapunov theory to ensure stability of the resulting closed-loop system. In this context, \cite{nghi2021lqr} proposes combining an LQR with an online NN model to stabilize nonlinear systems, although it does not provide stability guarantees. A NN-based controller which provides a Lyapunov function for stability is presented in \cite{dai2021lyapunov}, without addressing control objectives and performance metrics. A stabilizing NN-based control law for personal aerial vehicles has been developed in \cite{jang2022dnlc,jang2023uamdyncon}, without providing stability guarantees with the developed NN in the loop. Optimization-based projection layers have been integrated into a NN-based policy in \cite{donti2020enforcing}; this method improves robustness and performance of the system, however it is limited to linear systems. A NN-based adaptive control has been developed in \cite{autenrieb2019development,li2023tool}, without providing stability and convergence proofs. A NN-based Lyapunov-induced control law is presented in \cite{ravanbakhsh2019learner}; this method stops as soon as one feasible control Lyapunov function is detected, and thus it may lead to a poor performance.

To the best of our knowledge, the above-mentioned efforts on NN-based control lacks theoretical guarantees for stability and performance. Recently, the authors have developed a provably-stable NN-based control method in \cite{onestep2024}, where the NN is trained to imitate the behavior of a one-step-ahead predictive control policy, and, despite prior work, provides strong guarantees for stability and tracking performance. Experimental results and theoretical analysis reported in \cite{onestep2024} indicate a potential path for controlling a wide range of nonlinear control systems. However, its implementation can be challenging, as the training dataset should be generated by solving a nonlinear, non-convex optimization problem for every operating point, while existing solvers may not be able to solve the corresponding optimization problem. To address this issue, we develop a Neural Optimization Machine (NOM) \cite{Chen2022,Chen2024} to solve the corresponding optimization problem, which uses the NN's built-in backpropagation algorithm to convert the problem of solving an optimization problem into the training problem. This paper analytically investigates stability and tracking properties of the closed-loop system when the NN trained on data generated by the NOM is used in the control loop.

The key contributions of this paper are: i) developing a NOM to solve the non-convex, nonlinear optimization problem detailed in \cite{onestep2024}; ii) developing a NN-based control scheme based on the NOM-generated dataset, and analytically proving its stability and convergence properties; and iii) evaluating the effectiveness of the proposed methodology through simulation and experimental studies.

% The effectiveness of the proposed methodology is assessed via extensive simulation and experimental studies. 

% The rest of paper is organized as follows: Section \ref{sec:PS} briefly describes the one-step-ahead predictive control and states the problem. Section \ref{sec:NOM} introduces the proposed NOM structure and discusses its training. Details of the NN-based control scheme are provided in Section \ref{sec:NNControl}. Numerical simulations and experimental analysis are presented in Sections \ref{sec:SIM} and \ref{sec:EXPRIMENT}, respectively. Finally, Section \ref{sec:Con} draws the conclusion of the paper.

\noindent\textbf{Notation:} We denote the set of real numbers by $\mathbb{R}$, the set of positive real numbers by $\mathbb{R}_{>0}$, and the set of non-negative real numbers by $\mathbb{R}_{\geq0}$. For the matrix $A$, $A\succ0$ indicates that $A$ is positive definite, and $A\succeq0$ indicates that $A$ is positive semi-definite. We denote the transpose of matrix $A$ by $A^\top$. We use $\lambda_{\max}(A)$ and $\lambda_{\min}(A)$ to indicate the largest and smallest eigenvalue of $A$, respectively. Given $x\in\mathbb{R}^n$ and $Q\in\mathbb{R}^{n\times n}$, $\left\Vert x\right\Vert_Q=\sqrt{\left\vert x^\top Qx\right\vert}$. For a function $Y(x)$, $Y(x)|_{x=x^\dag}$ indicates that the function $Y(x)$ is evaluated at $x=x^\dag$. We use $\odot$ to indicate Hadamard product. We use $\star$ to represent matrix elements that can be interpreted from the matrix structure. The mark $\ast$ is used to indicate optimality. We use $\text{diag}\{a_1,a_2,\cdots,a_n\}$ to denote a $n\times n$ matrix with diagonal entries $a_1,a_2,\cdots,a_n\in\mathbb{R}$.

\section{Problem Statement}\label{sec:PS}
\noindent\textbf{Problem Setting:} Consider the following discrete-time affine nonlinear system:
\begin{subequations}\label{eq:system}
\begin{align}
x(t+1) =& f\left(x(t)\right)+g\left(x(t)\right)u(t),\\
y(t)=&h\left(x(t),u(t)\right),
\end{align}
\end{subequations}
where $x(t)=[x_1(t)~\cdots~x_n(t)]^\top\in \mathbb{R}^n$ is the state vector at time instant $t$, $u(t)=[u_1(t)~\cdots~u_q(t)]^\top\in \mathbb{R}^q$ is the control input at time instant $t$, $y(t)=[y_1(t)~\cdots~y_m(t)]^\top\in \mathbb{R}^m$ is the output vector at time instant $t$, and $f:\mathbb{R}^n \rightarrow \mathbb{R}^n$, $g:\mathbb{R}^{n}\rightarrow \mathbb{R}^{n\times q}$, and $h:\mathbb{R}^n\times\mathbb{R}^q\rightarrow\mathbb{R}^m$ are known nonlinear functions.

Let $\mathcal{X}\subset\mathbb{R}^n$ be the operating region of the system\footnote{This paper does not aim at enforcing the operating region as a constraint. Future work will discuss how to extend the proposed methodology to enforce state and input constraints.}. We assume that the linearized system inside the operating region $\mathcal{X}$ is stabilizable; that is, the pair $(A,B)$ is stabilizable, where $A=\frac{\partial f(x)}{\partial x}$ and $B=g(x)$ for any $x\in\mathcal{X}$. We also assume that $g(x)$ is $\mu_g$ Lipschitz continuous throughout $\mathcal{X}$. 

% Moreover, we assume that the linearization error can be upper-bounded with $\delta$ throughout the region $\mathcal{X}$. 

For any given desired reference $r \in \mathbb{R}^m$, we denote the corresponding steady state and input by $\bar{x}_r$ and $\bar{u}_r$, respectively; that is,
\begin{align}\label{eq:SSconfiguration}
\bar{x}_r=f\left(\bar{x}_r\right)+g\left(\bar{x}_r\right)\bar{u}_r,~~r=h\left(\bar{x}_r,\bar{u}_r\right).
\end{align}

The $r$ is steady-state admissible if $\bar{x}_r\in\mathcal{X}$. The set of all steady-state admissible references is denoted by $\mathcal{R}\subseteq\mathbb{R}^m$.

\noindent\textbf{NN-Based One-Step-Ahead Predictive Control:} Given a desired reference $r \in \mathcal{R}$, a NN-based control scheme has been developed in \cite{onestep2024} to steer the states of system \eqref{eq:system} to the desired steady state $\bar{x}_r$ and the control input to the desired steady input $\bar{u}_r$. In the proposed control scheme, a NN is trained to imitate the behavior of the following one-step-ahead predictive control law at any time instant $t$:
\begin{subequations}\label{eq:OptimizationProblemMain}
\begin{align}\label{eq:CostFunction}
u^\ast(t),P^\ast(t)=\arg\,\min_{u,P}\,& J\big(x(t),r,u,P\big),
\end{align}
subject to the following constraints:
\begin{align}
& x^+=A_tx(t)+B_tu,\label{eq:Constraint1}\\
& P\succ0,\label{eq:Constraint2}\\
& V(x^+,r,P)-V\left(x(t),r,P\right)\leq-\theta\left\Vert x(t)-\bar{x}_r\right\Vert,\label{eq:Constraint3}
\end{align}
\end{subequations}
where $J\big(x(t),r,u,P\big):=\left\Vert x^+-\bar{x}_r\right\Vert^2_{Q_x}+\left\Vert u-\bar{u}_r\right\Vert^2_{Q_u}+V\left(\left(x(t),r,P\right)\right)^2$, $Q_x\succeq0$ ($Q_x\in\mathbb{R}^{n\times n}$) and $Q_u\succ0$ ({$Q_u\in\mathbb{R}^{q\times q}$}) are weighting matrices, $\theta\in\mathbb{R}_{>0}$ is a design parameter, $A_t$ and $B_t$ denote the linearized system matrices around the current state $x(t)$ (i.e., $A_t=\frac{\partial f(x)}{\partial x}|_{x=x(t)}$ and $B_t=g\big(x(t)\big)$), and $V(x,r,P)=\left\Vert x-\bar{x}_r\right\Vert_{P}$ is the Lyapunov function with $P=P^\top\succ0$ being the Lyapunov matrix in the following form:
\begin{align}
P = \begin{bmatrix}
p_{11} & p_{12} & p_{13} & \cdots & p_{1n} \\
\star & p_{22} & p_{23} & \cdots & p_{2n}\\
\star & \star & p_{33} & \cdots & p_{3n}\\
\vdots & \vdots & \vdots & \ddots  & \vdots \\
\star  & \star & \star & \cdots & p_{nn}
\end{bmatrix}.
\end{align}

The first term in the objective function $J\big(x(t),r,u,P\big)$ penalizes the state error, the second term penalizes the input error, and the last term penalizes the magnitude of the Lyapunov function. The constraint \eqref{eq:Constraint1} enforces the linearized dynamics, constraint \eqref{eq:Constraint2} ensures that the Lyapunov matrix $P$ is positive definite, and constraint \eqref{eq:Constraint3} is a contractive constraint \cite{de2000contractive} enforcing the stability of the closed-loop system, with $\theta\in\mathbb{R}_{>0}$ being the contraction constant.

The one-step-ahead predictive control scheme given in \eqref{eq:OptimizationProblemMain} ensures closed-loop stability and boundedness of the tracking error; see the following theorem.

\begin{theorem}[\cite{onestep2024}]
For any given $r\in\mathcal{R}$, the control law obtained from \eqref{eq:OptimizationProblemMain} ensures that 
\begin{align}\label{eq:TrackingPerformance1}
\left\Vert x(t)-\bar{x}_r\right\Vert\leq\frac{3\sqrt{\bar{\lambda}_P}\delta}{\theta},
\end{align}
where $\bar{\lambda}_P:=\sup_{t\geq0}\lambda_{\text{max}}\big(P^\ast(t)\big)$ ($\bar{\lambda}_P\in\mathbb{R}_{>0}$), and  $\delta:=\sup_{t\geq0}\left\Vert f\big(x(t)\big)-A_t x(t)\right\Vert$. 
\end{theorem}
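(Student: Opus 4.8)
The plan is to read the contractive constraint \eqref{eq:Constraint3} as an \emph{approximate} Lyapunov decrease condition for the true closed-loop trajectory, corrupted by the linearization error, and then to extract an ultimate bound on $\|x(t)-\bar{x}_r\|$. The first step I would take is to identify the one-step prediction error exactly. Since $B_t=g(x(t))$, subtracting the predicted successor $x^+(t)=A_tx(t)+B_tu^\ast(t)$ of \eqref{eq:Constraint1} from the actual successor $x(t+1)=f(x(t))+g(x(t))u^\ast(t)$ gives $x(t+1)-x^+(t)=f(x(t))-A_tx(t)$, so that $\|x(t+1)-x^+(t)\|\le\delta$ by the definition of $\delta$; the successor used inside the constraint and the true successor therefore differ by at most $\delta$ in Euclidean norm. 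A second, purely pointwise consequence of \eqref{eq:Constraint3} that I would record immediately is that, evaluated with $P=P^\ast(t)$ and using $V\ge0$, the constraint yields $\theta\|x(t)-\bar{x}_r\|\le V(x(t),r,P^\ast(t))-V(x^+(t),r,P^\ast(t))\le V(x(t),r,P^\ast(t))$. This converts any bound on the Lyapunov \emph{value} into a bound on the Euclidean tracking error, and does so without needing a lower bound on $\lambda_{\min}(P^\ast(t))$, which is important since no such bound is available.

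Next I would transfer the guaranteed decrease from the predicted trajectory to the true one. Because $P^\ast(t)\succ0$, the map $v\mapsto\|v\|_{P^\ast(t)}$ is a genuine norm, so the reverse triangle inequality together with $\|v\|_{P^\ast(t)}\le\sqrt{\lambda_{\max}(P^\ast(t))}\,\|v\|\le\sqrt{\bar{\lambda}_P}\,\|v\|$ and the first step give $V(x(t+1),r,P^\ast(t))\le V(x^+(t),r,P^\ast(t))+\sqrt{\bar{\lambda}_P}\,\delta$. Combining this with the contractive constraint produces the one-step inequality $V(x(t+1),r,P^\ast(t))\le V(x(t),r,P^\ast(t))-\theta\|x(t)-\bar{x}_r\|+\sqrt{\bar{\lambda}_P}\,\delta$, i.e.\ a Lyapunov decrease of magnitude $\theta\|x(t)-\bar{x}_r\|$ offset by a fixed disturbance $\sqrt{\bar{\lambda}_P}\,\delta$.

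The remaining, and in my view the hardest, step is that the Lyapunov function is itself a decision variable, so the matrix switches from $P^\ast(t)$ to $P^\ast(t+1)$ between consecutive one-step inequalities and a naive telescoping is invalid. To close the argument I would control the jump $V(x(t+1),r,P^\ast(t+1))-V(x(t+1),r,P^\ast(t))$ through feasibility and optimality of the time-$(t+1)$ problem: the stabilizability of $(A,B)$ guarantees the existence of a feasible positive-definite $P$, while the cost $J$ in \eqref{eq:OptimizationProblemMain} explicitly penalizes the Lyapunov value, so the optimizer is driven to keep $V$ from jumping upward across the switch. With the matrix switch thus bounded, the one-step inequalities chain into a contraction showing that $V(x(t),r,P^\ast(t))$ is ultimately bounded by a constant multiple of $\sqrt{\bar{\lambda}_P}\,\delta$; feeding this back through the pointwise bound $\|x(t)-\bar{x}_r\|\le\tfrac{1}{\theta}V(x(t),r,P^\ast(t))$ yields the stated estimate, with the factor $3$ absorbing the model-mismatch term and the matrix-switching residual accumulated along the iteration. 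I expect the matrix-switch bound to be the genuine crux; the rest is the standard ``decrease minus disturbance'' ultimate-boundedness bookkeeping.
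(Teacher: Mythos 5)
This theorem is imported from \cite{onestep2024}; the present paper gives no proof of it, so the only thing to compare against is the trace of the original argument that the paper does use, namely the inequality invoked in the proof of Theorem~\ref{theorem:NNControl}: $\left\Vert f(x(t))+g(x(t))u^\ast(t)-\bar{x}_r\right\Vert_{P^\ast(t+1)}-\left\Vert x(t)-\bar{x}_r\right\Vert_{P^\ast(t)}\leq 3\sqrt{\bar{\lambda}_P}\delta-\theta\left\Vert x(t)-\bar{x}_r\right\Vert$. This reveals the intended structure: one proves a one-step decrease for the \emph{true} successor, stated already \emph{across} the matrix switch $P^\ast(t)\to P^\ast(t+1)$, with all mismatch lumped into the constant $3\sqrt{\bar{\lambda}_P}\delta$, and then reads the bound off directly, since $V$ decreases strictly whenever $\left\Vert x(t)-\bar{x}_r\right\Vert>3\sqrt{\bar{\lambda}_P}\delta/\theta$. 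Your first three steps (the prediction error $\Vert x(t+1)-x^+(t)\Vert=\Vert f(x(t))-A_tx(t)\Vert\leq\delta$, the transfer of the contractive constraint to the true successor via the triangle inequality, and the resulting inequality $V(x(t+1),r,P^\ast(t))\leq V(x(t),r,P^\ast(t))-\theta\Vert x(t)-\bar{x}_r\Vert+\sqrt{\bar{\lambda}_P}\delta$) are correct and consistent with that structure.

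The genuine gap is precisely the step you yourself flag as the crux: the switch from $P^\ast(t)$ to $P^\ast(t+1)$. Your proposed resolution --- that feasibility plus the $V^2$ term in the cost ``drives the optimizer to keep $V$ from jumping upward'' --- cannot be completed as stated. First, $J$ is a sum of three terms, so optimality bounds only the total cost, never the Lyapunov term in isolation: the time-$(t+1)$ optimizer may accept a larger $V(x(t+1),r,P)$ in exchange for smaller state- and input-error terms. Second, any comparison argument needs a feasible candidate at time $t+1$ whose $V$-value you control, but $P^\ast(t)$ (paired with any $u$) need not be feasible for the time-$(t+1)$ problem, since \eqref{eq:Constraint3} couples $P$ to the new state and the new linearization; stabilizability gives existence of \emph{some} feasible pair, not one with a comparable Lyapunov value. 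Third, scaling $P\to\alpha^2P$ with $\alpha\geq1$ preserves \eqref{eq:Constraint2}--\eqref{eq:Constraint3}, so the feasible set contains matrices making $V$ arbitrarily large; nothing structural forbids an upward jump except an explicit bound, which is exactly what is missing. Separately, even granting a bounded switch residual, your final bookkeeping does not reproduce the stated estimate: chaining your one-step inequalities into a contraction gives $V(t+1)\leq(1-\theta/\sqrt{\bar{\lambda}_P})V(t)+O(\sqrt{\bar{\lambda}_P}\delta)$, hence an ultimate bound $V_\infty=O(\bar{\lambda}_P\delta/\theta)$, and your pointwise estimate $\Vert x(t)-\bar{x}_r\Vert\leq V(x(t),r,P^\ast(t))/\theta$ then yields a bound of order $\bar{\lambda}_P\delta/\theta^2$, not $3\sqrt{\bar{\lambda}_P}\delta/\theta$; this is strictly weaker in the regime of large $\theta$, which is the regime the method is designed for. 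The route consistent with the paper avoids this detour entirely: establish the cross-switch decrease inequality displayed above, then conclude attractiveness and invariance of the ball of radius $3\sqrt{\bar{\lambda}_P}\delta/\theta$ from the sign of the decrease alone.
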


It has been shown in \cite{onestep2024} that when a NN is used in the loop that imitates the behavior of the one-step-ahead predictive control scheme given in \eqref{eq:OptimizationProblemMain}, the resulting closed-loop system is stable and the tracking error remains bounded at all times, despite possible NN's approximation errors; the following theorem summarizes the theoretical properties of the resulting closed-loop system.

\begin{theorem}[\cite{onestep2024}]
Consider system \eqref{eq:system}, and suppose that a NN trained to imitate the behavior of the control scheme given in \eqref{eq:OptimizationProblemMain} is utilized in the control loop. Then, the tracking error $\left\Vert x(t)-\bar{x}_r\right\Vert$ remains bounded, and there exists $\vartheta\in\mathbb{R}_{>0}$ such that $\left\Vert x(t)-\bar{x}_r\right\Vert\leq\vartheta$ as $t\rightarrow\infty$.

% At any time instant $t$, let $\Delta u(t)$ and $\Delta P(t)$ be the NN's error in approximating the optimal control input $u^\ast(t)$ and $P^\ast(t)$, respectively. 

% where $\ell_i:\mathbb{R}_{\geq0}\times\mathbb{R}_{\geq0}\rightarrow\mathbb{R}_{\geq0},~i\in\{1,2\}$ is such that $\ell_i=0$ if and only if $\Delta u=\Delta P=0$, with $\Delta u:=\sup_{t\geq0}\left\Vert\Delta u(t)\right\Vert$ and $\Delta P:=\sup_{t\geq0}\left\Vert\Delta P(t)\right\Vert$.
\end{theorem}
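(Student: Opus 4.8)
The plan is to regard the NN-based controller as a perturbation of the exact one-step-ahead policy and to recover the conclusion by inflating the residual $\delta$ that already appears in \eqref{eq:TrackingPerformance1}. Write $u^\ast(t)$ and $P^\ast(t)$ for the exact optimizers of \eqref{eq:OptimizationProblemMain} evaluated at the state $x(t)$ actually visited along the closed-loop trajectory (these exist for every $x(t)\in\mathcal{X}$ by the stabilizability assumption), and let $\hat{u}(t)$ be the NN output that drives the plant, so that $x(t+1)=f(x(t))+g(x(t))\hat{u}(t)$. Since the NN is trained on $\mathcal{X}$ to imitate the optimal policy, I would first posit a uniform approximation bound $\|\hat{u}(t)-u^\ast(t)\|\leq\varepsilon$ for all $t$, and use $V(x,r,P^\ast(t))=\|x-\bar{x}_r\|_{P^\ast(t)}$ as the analysis certificate.

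The first step is to compare the true successor state with the linearized optimal prediction $x^+=A_tx(t)+B_tu^\ast(t)$ used inside the contractive constraint \eqref{eq:Constraint3}. Because $B_t=g(x(t))$, the difference decomposes as $x(t+1)-x^+=\big(f(x(t))-A_tx(t)\big)+g(x(t))\big(\hat{u}(t)-u^\ast(t)\big)$; the first term is bounded by $\delta$, and, using $\mu_g$-Lipschitz continuity of $g$ (and hence its boundedness $\|g(x(t))\|\leq\bar{g}$ on the operating region) the second is bounded by $\bar{g}\varepsilon$. Hence $\|x(t+1)-x^+\|\leq\bar{\delta}$ with $\bar{\delta}:=\delta+\bar{g}\varepsilon$, i.e., the NN error enters the analysis exactly like an enlarged linearization residual.

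Next I would reproduce the decrease estimate underlying the previous theorem with $\delta$ replaced by $\bar{\delta}$. The optimal pair satisfies \eqref{eq:Constraint3} exactly, so $V(x^+,r,P^\ast(t))-V(x(t),r,P^\ast(t))\leq-\theta\|x(t)-\bar{x}_r\|$; combining this with the Lipschitz bound $|V(x(t+1),r,P^\ast(t))-V(x^+,r,P^\ast(t))|\leq\sqrt{\bar{\lambda}_P}\,\|x(t+1)-x^+\|$ yields a one-step inequality of the form $V(x(t+1),r,P^\ast(t))-V(x(t),r,P^\ast(t))\leq-\theta\|x(t)-\bar{x}_r\|+\sqrt{\bar{\lambda}_P}\,\bar{\delta}$. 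The certified term $-\theta\|x(t)-\bar{x}_r\|$ dominates the perturbation whenever the tracking error exceeds $\sqrt{\bar{\lambda}_P}\,\bar{\delta}/\theta$, which forces $V$ to decrease, precludes unbounded growth of the tracking error, and confines the state to a sublevel set. Converting that sublevel set back to a Euclidean ball through $\sqrt{\underline{\lambda}_P}\,\|x-\bar{x}_r\|\leq V\leq\sqrt{\bar{\lambda}_P}\,\|x-\bar{x}_r\|$ and accounting for a single-step overshoot gives the ultimate bound $\vartheta=3\sqrt{\bar{\lambda}_P}\,\bar{\delta}/\theta$, of the same form as \eqref{eq:TrackingPerformance1}.

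The main obstacle is that $\hat{u}(t)$ does not itself satisfy the contractive constraint, so the decrease property cannot be read off directly and must instead be reconstructed as a perturbation of the optimal policy's certified contraction; this is what forces a uniform-in-$t$ control of both $\varepsilon$ and $\|g(x(t))\|$, since a state-dependent or unbounded error would destroy the finite $\vartheta$. A secondary subtlety, inherited from the previous theorem, is that the certificate $P^\ast(t)$ is time-varying: chaining the per-step inequalities requires bounding the gap between $V(\cdot,r,P^\ast(t))$ and $V(\cdot,r,P^\ast(t+1))$, which I would control using the uniform eigenvalue bounds $\underline{\lambda}_P\leq\lambda_{\min}(P^\ast(t))$ and $\lambda_{\max}(P^\ast(t))\leq\bar{\lambda}_P$.
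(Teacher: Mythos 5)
Your overall strategy---treat the NN as a bounded perturbation of the exact one-step-ahead policy and inflate the residual $\delta$---is the same perturbation idea the paper uses (its in-text proof of the analogous result, Theorem~\ref{theorem:NNControl}), but there is a genuine gap at the chaining step. Your one-step inequality is anchored to the \emph{same} certificate $P^\ast(t)$ on both sides: $V(x(t+1),r,P^\ast(t))-V(x(t),r,P^\ast(t))\leq-\theta\Vert x(t)-\bar{x}_r\Vert+\sqrt{\bar{\lambda}_P}\,\bar{\delta}$. This quantity does not telescope, because the next step's inequality is written in the $P^\ast(t+1)$-norm; per-step decrease of a function that changes identity every step does not ``confine the state to a sublevel set,'' as you claim. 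Patching the switch with the eigenvalue bounds alone introduces a multiplicative factor $\sqrt{\bar{\lambda}_P/\underline{\lambda}_P}\geq1$ per step, which by itself permits geometric growth. The repair is possible but quantitative: converting your inequality to the Euclidean norm gives $\Vert x(t+1)-\bar{x}_r\Vert\leq\frac{\sqrt{\bar{\lambda}_P}-\theta}{\sqrt{\underline{\lambda}_P}}\Vert x(t)-\bar{x}_r\Vert+\frac{\sqrt{\bar{\lambda}_P}}{\sqrt{\underline{\lambda}_P}}\bar{\delta}$, which is a contraction only under the extra condition $\theta>\sqrt{\bar{\lambda}_P}-\sqrt{\underline{\lambda}_P}$ --- a condition you never state. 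The paper avoids this entirely by working with the time-shifted difference $V(x(t+1),r,\tilde{P}(t+1))-V(x(t),r,\tilde{P}(t))$, which \emph{does} telescope, and by importing from \cite{onestep2024} the cross-time inequality $\Vert f(x(t))+g(x(t))u^\ast(t)-\bar{x}_r\Vert_{P^\ast(t+1)}-\Vert x(t)-\bar{x}_r\Vert_{P^\ast(t)}\leq3\sqrt{\bar{\lambda}_P}\delta-\theta\Vert x(t)-\bar{x}_r\Vert$; that imported bound is precisely where the time-variation of the certificate is absorbed and where the factor $3$ originates. Your final constant $\vartheta=3\sqrt{\bar{\lambda}_P}\bar{\delta}/\theta$ is asserted by analogy with \eqref{eq:TrackingPerformance1} rather than derived, and it is exactly the unproven cross-time ingredient that would justify it.

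Two secondary points. First, your blanket bound $\Vert g(x(t))\Vert\leq\bar{g}$ along the closed-loop trajectory is mildly circular: finiteness of that supremum presupposes the trajectory boundedness you are trying to establish. The paper instead writes $g(x(t))=g(\bar{x}_r)+\big(g(x(t))-g(\bar{x}_r)\big)$ and uses $\mu_g$-Lipschitz continuity (see \eqref{eq:UpperBound1}), so the state dependence shows up as a term proportional to $\Vert x(t)-\bar{x}_r\Vert$ that is absorbed into the threshold on $\theta$ --- this is why the paper's theorem demands a ``sufficiently large $\theta$.'' Second, on the positive side, your choice to run the analysis with the exact optimizer $P^\ast(t)$ rather than the NN's output $\tilde{P}(t)$ is legitimate (the $P$-output never enters the plant dynamics) and eliminates all of the $\sqrt{\bar{\Delta P}}$ bookkeeping the paper carries through its proof; that would be a genuine simplification --- but it only pays off once the chaining step above is actually completed.
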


% \medskip
% \begin{remark}
% According to \eqref{eq:TrackingPerformance2}, it is obvious that if the trained NN is ideal (i.e., it exactly produces the optimal solution), the nominal tracking performance given in \eqref{eq:TrackingPerformance1} can be achieved with the NN in the loop. Also, since the contraction constant $\theta$ is a design parameter, it follows from \eqref{eq:TrackingPerformance2} that the tracking error can be made arbitrarily small.   
% \end{remark}

\noindent\textbf{Challenges and Goal:} At the present stage, one of the main obstacles to a widespread diffusion of the methodology proposed in \cite{onestep2024} is that solving the optimization problem \eqref{eq:OptimizationProblemMain} and collecting training data is very challenging. More precisely, the optimization problem \eqref{eq:OptimizationProblemMain} is nonlinear and non-convex with respect to decision variables $u$ and $P$, and existing solvers may not able to solve all instances of it. The problem becomes more severe when $\theta$ is set to a large value. Thus, the trained NN gives a large approximation error, and thus the resulting NN-based control scheme yields poor performance. This paper aims at addressing this practical issue through developing a NOM to solve the optimization problem \eqref{eq:OptimizationProblemMain} and generate a reliable training dataset. Theoretical guarantee for stability and convergence of the resulting NN-based closed-loop system will be also provided.

% penalizes the error of state and input while finding the minimum energy. \eqref{eq:Constraint1} ensures the linearization process, \eqref{eq:Constraint2} ensures the Lyapunov function values are non-negative. \eqref{eq:Constraint3} enforces the derivative of Lyapunov function is negative. the designed variable $\theta$ adjust the decay magnitude. {\color{red} paper 1} proved the one-step-ahead optimal control method is feasible when $t>0$ in $\mathcal{X}$. It also shows that for any given $r\in\mathcal{R}$, the relevant tracking error remains bounded in a region, i.e., $\left\Vert x(t)-\bar{x}_r\right\Vert\leq\frac{3\sqrt{\bar{\lambda}_P}\delta}{\theta}$, where $\bar{\lambda}_P=\sup_{t\geq0}\lambda_{\text{max}}\left(P^\ast(t)\right)$ ($\bar{\lambda}_P\in\mathbb{R}_{>0}$).  

% However, because the optimal problem is non-convex and nonlinear, {\color{red} paper 1} encountered numerical issues in the optimal process, the used YALMIP toolbox sometimes failed to find optimal solutions in some states and references because of poor optimization process such as maximum iteration reached. These poor solutoins reduced the performance of the control scheme and highly impact the performance of trained NN controller. 

%%%%%%%%%%%%%%%%%%%%%%%%%%%%%%%%%%%%%%%%%%
\section{NOM For Generating the Training Dataset}\label{sec:NOM}

As mentioned above, we will develop a NOM \cite{Chen2022,Chen2024} to solve the optimization problem \eqref{eq:OptimizationProblemMain} and generate the training dataset. NOMs convert the problem of solving an optimization problem into the NN training problem. The general structure of the proposed NOM is shown in Fig.~\ref{fig:NOMIP}, where different colors of connections mean whether the parameters are fixed (orange) or trainable (blue).

\noindent\textbf{NOM Structure:} The NOM structure shown in Fig. \ref{fig:NOMIP} is built upon the objective function $J(x,r,u,p)$ given in \eqref{eq:CostFunction}. Besides the objective function, the proposed NOM consists of the NOM starting point layer, the NOM constraint layer, and the NOM output layer. The neurons in the NOM starting point layer (neurons shown in yellow in Fig. \ref{fig:NOMIP}) and the input layer (neurons shown in gray in Fig. \ref{fig:NOMIP}) are connected in a one-to-one correspondence manner; no activation functions are applied, and the input parameters are functions of the NOM starting point with the parameters being weights and biases between the starting point layer and the input layer. We use $\omega_{u,i},~i\in\{1,\cdots,q\}$ and $\omega_{P,i},~i\in\{1,\cdots,\frac{n(n+1)}{2}\}$ to denote weights, and $\beta_{u,i},~i\in\{1,\cdots,q\}$ and $\beta_{P,i}~,i\in\{1,\cdots,\frac{n(n+1)}{2}\}$ to denote biases. Thus, given the starting point $\hat{u}$ and $\hat{P}$, the input parameters are:
\begin{subequations}
\begin{align}
{u}=&{\Omega_u}\odot\hat{u}+{\mathcal{B}_u},\\
{P}=&{\Omega_P}\odot\hat{P}+{\mathcal{B}_P},
\end{align}   
\end{subequations}
where 
\begin{align*}
&\Omega_u=\begin{bmatrix}\omega_{u,1}\\\omega_{u,2}\\\vdots\\\omega_{u,q}\end{bmatrix},~\mathcal{B}_u=\begin{bmatrix}\beta_{u,1}\\\beta_{u,2}\\\vdots\\\beta_{u,q}\end{bmatrix},\\
&\Omega_P=\begin{bmatrix}
\omega_{P,1} & \omega_{P,2} & \omega_{P,3} & \cdots & \omega_{P,n} \\
\star & \omega_{P,(n+1)} & \omega_{P,(n+2)} & \cdots & \omega_{P,(2n-1)}\\
\star & \star & \omega_{P,2n} & \cdots & \omega_{P,(3n-3)}\\
\vdots & \vdots & \vdots & \ddots  & \vdots \\
\star  & \star & \star & \cdots & \omega_{P,\left(\frac{n(n+1)}{2}\right)}
\end{bmatrix},\\
&\mathcal{B}_P=\begin{bmatrix}
\beta_{P,1} & \beta_{P,2} & \beta_{P,3} & \cdots & \beta_{P,n} \\
\star & \beta_{P,(n+1)} & \beta_{P,(n+2)} & \cdots & \beta_{P,(2n-1)}\\
\star & \star & \beta_{P,2n} & \cdots & \beta_{P,(3n-3)}\\
\vdots & \vdots & \vdots & \ddots  & \vdots \\
\star  & \star & \star & \cdots & \beta_{P,\left(\frac{n(n+1)}{2}\right)}
\end{bmatrix}.
\end{align*}

% which is a NN-based surrogate model of the objective function in \eqref{eq:OptimizationProblemMain} and can have any arbitrary architecture; neurons of the NN objective function are shown in blue in Fig. \ref{fig:NOMIP}. We use $J_{NN}(x,r,u,P)$ to denote the NN objective function; for any given $x$, $r$, $u$, and $P$; we have $J(x,r,u,P)\approx J_{NN}(x,r,u,P)$. 

% \begin{remark}
% Since the objective function $J(x,r,u,P)$ given in \eqref{eq:CostFunction} is mathematically representable, instead of training a NN to approximate its output, one can directly use the objective function $J(x,r,u,P)$ in the proposed NOM.
% \end{remark}

There are two constraint neurons (shown in green in Fig.~\ref{fig:NOMIP}) in the NOM constraint layer to represent the constraints  \eqref{eq:Constraint2} and \eqref{eq:Constraint3}. For each constraint, a Rectified Linear Unit (ReLU) activation function is applied to enforce constraint satisfaction. More precisely, the output of constraint neurons are:
\begin{subequations}
\begin{align}
G_1=\left\{
\begin{array}{rl}
    0, & \Delta V+\theta\left\Vert x-\bar{x}_r\right\Vert\leq0 \\
    c\cdot\left(\Delta V+\theta\left\Vert x-\bar{x}_r\right\Vert\right), & \text{Otherwise}
\end{array}
\right.,
\end{align}
and
\begin{align}
G_2=\left\{
\begin{array}{rl}
    0, & P\succ0 \\
    c\cdot\left\vert\lambda_{\min}(P)\right\vert, & \text{Otherwise}
\end{array}
\right.,
\end{align}
where $c\in\mathbb{R}_{>0}$ is a positive (typically large) constant and $\Delta V:=V(x^+,r,P)-V\left(x,r,P\right)$, with $x^+$ being as in \eqref{eq:Constraint1}.
\end{subequations}

\begin{figure}[!t]
    \centering
    \includegraphics[width=8.5cm]{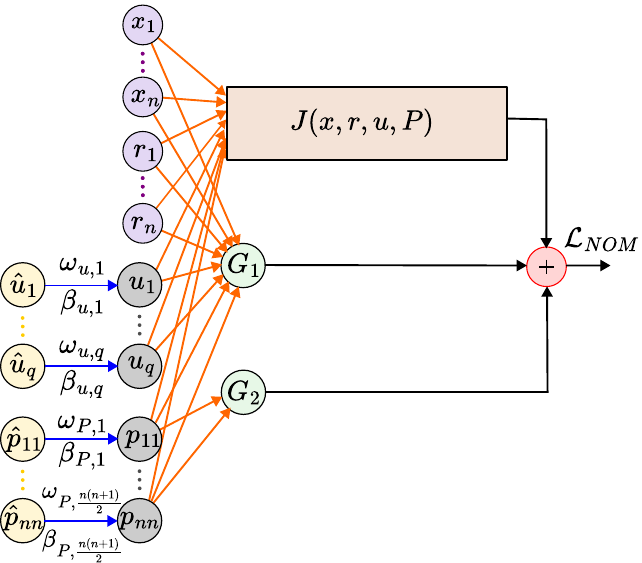}
    \caption{The general structure of the NOM proposed to obtain the solution of the optimization problem \eqref{eq:OptimizationProblemMain}.}
    \label{fig:NOMIP}
\end{figure}

% In an inequality constraint, the constraint output can be find by moving the left term on a larger or equal sign to the right side.

% is considered through the NOM constraint layer with a modified rectified linear unit activation function.

The NOM output (the red neuron in Fig. \ref{fig:NOMIP}) is the sum of the objective function and the output of the constraint neurons, which is considered as the {NOM loss function}. In mathematical terms, the loss function is:
\begin{equation}\label{eq:NOMLoss}
\mathcal{L}_{NOM}(u,P)=J\left(x,r,u,P\right)+G_1+G_2.
\end{equation}

\noindent\textbf{NOM Training:} The main feature of the proposed NOM is that it uses the NNs’ built-in backpropagation algorithm to transform the problem of calculating the gradient with respect to inputs (i.e., $u$ and $P$) to the problem of calculating the gradient with respect to weights and biases.

For a given state vector $x\in\mathcal{X}$ and desired reference $r\in\mathcal{R}$, consider $S$ starting points (denoted by $\hat{u}^{(j)}$ and $\hat{P}^{(j)},~j\in\{1,\cdots,S\}$), and set the initial weights and biases for the $j$th starting point as follows:
\begin{subequations}\label{eq:RandomnessWeights}
\begin{align}
{\omega_{u,i}}^{(j)}=&1+\epsilon,~i\in\{1,\cdots,q\},\\
{\omega_{P,i}}^{(j)}=&1+\epsilon,~i\in\left\{1,\cdots,\frac{n(n+1)}{2}\right\},\\
{\beta_{u,i}}^{(j)}=&\epsilon,~i\in\{1,\cdots,q\},\\
{\beta_{P,i}}^{(j)}=&\epsilon,~i\in\left\{1,\cdots,\frac{n(n+1)}{2}\right\},
\end{align} 
\end{subequations}
where $\epsilon$ is a small random number; we use different random values for every weight and bias. For any given $x\in\mathcal{X}$, the most intuitive approach to select $S$ starting points is: i) divide the $(u,P)$-space into multiple cells; ii) compute the value of the loss function given in \eqref{eq:NOMLoss} for each cell; and iii) select $S$ cells that yield the smallest values.

For the $j$th starting point, the NOM uses the gradient descent algorithm to minimize the value of the loss function by adjusting the weights and biases of the connections between the NOM starting layer and the input layer. Once the value of the loss function converges, the optimal solution obtained from the $j$th starting point can be computed as follows:
\begin{subequations}
\begin{align}
{u^\ast}^{(j)}=&{\Omega_u^\ast}^{(j)}\odot\hat{u}^{(j)}+{\mathcal{B}_u^\ast}^{(j)},\\
{P^\ast}^{(j)}=&{\Omega_P^\ast}^{(j)}\odot\hat{P}^{(j)}+{\mathcal{B}_P^\ast}^{(j)},
\end{align}   
\end{subequations}
where ${\Omega_u^\ast}^{(j)}$, ${\mathcal{B}_u^\ast}^{(j)}$, ${\Omega_P^\ast}^{(j)}\odot\hat{P}^{(j)}$, and ${\mathcal{B}_P^\ast}^{(j)}$ are the optimal weights and biases. Once the optimal solutions from all starting points are computed, the solution to the optimization problem \eqref{eq:OptimizationProblemMain} can be computed as:
\begin{align}
u^{\ast},P^{\ast}=\arg\min\limits_{\substack{{u^\ast}^{(j)},{P^\ast}^{(j)},\\j\in\{1,\cdots,S\}}}\bigg\{&J\left(x,r,{u^\ast}^{(j)},{P^\ast}^{(j)}\right)\nonumber\\
&+G_1+G_2\bigg\}.
\end{align}

\begin{remark}
Although one starting point may be sufficient to train the NOM (note that the NOM is not trained in a supervised way), as suggested in \cite{Chen2024} and as observed in our numerical experiments, starting from different points and introducing randomness as in \eqref{eq:RandomnessWeights} can prevent the NOM from producing a locally optimal solution.    
\end{remark}

\section{Provably-Stable NN-Based Control}\label{sec:NNControl}
In this section, we discuss the data collection and training procedure to train a NN that emulates the behavior of the control scheme given in  \eqref{eq:OptimizationProblemMain} inside the region $\mathcal{X}$. Also, we analyze the properties of the closed-loop system where the trained NN is utilized in the loop to control system \eqref{eq:system}.

\noindent\textbf{Collecting Data:} To obtain the training dataset, first, we divide the operating region $\mathcal{X}$ and the set of steady-state admissible references $\mathcal{R}$ into $N_x$ and $N_r$ grid cells, respectively; this process generates $N_x\cdot N_r$ data points which are denoted by $(x^i,r^i),~i=1,\cdots,N_x\cdot N_r$, where $x^i\in\mathcal{X}$ and $r^i\in\mathcal{R}$. Then, for each data point $(x^i,r^i)$, we use the NOM discussed in Section \ref{sec:NOM} to obtain the optimal solution to the optimization problem \eqref{eq:OptimizationProblemMain}; we denote obtained control input by $u^i$ and the obtained Lyapunov matrix by $P^i$. Finally, the training dataset can be constructed as follows:
\begin{align}
\mathbb{D}_{training}=\left\{\left(x^i,r^i,u^i,P^i\right),~i=1\cdots,N_x\cdot N_r\right\},
\end{align}
where the tuple $\left(x^i,r^i,u^i,P^i\right)$ is the $i$th training data point.

\begin{remark}
By using a larger $N_x$ and $N_r$, one can make sure that enough training data is collected and the training dataset $\mathbb{D}_{training}$ covers all relevant aspects of the problem.
\end{remark}

\noindent\textbf{NN Training:} Once the training dataset $\mathbb{D}_{training}$ is collected, we use it to train a (deep) feedforward NN \cite{Fine1999}; our motivation to consider feedforward NNs is that they are flexible and scalable, and their training is relatively straightforward compared to other structures \cite{Goodfellow2016}. 
We use the growing method (see, e.g., \cite{Evci2022,Wu2020,Bengio2005}) to determine the number of hidden layers and neurons; that is, we start with a small network and increase the complexity until the desired accuracy is achieved. Also, we use the dropout method \cite{srivastava2014dropout,lim2021study,sanjar2020weight,wu2015towards} to avoid overfitting. To improve the training process, we use the cosine annealing strategy \cite{liu2022super,eshraghian2022navigating} to adjust the learning rate. To evaluate the NN's convergence, we use the mean squared error (MSE) loss function \cite{Goodfellow2016}.

\noindent\textbf{NN-based Control System:} Consider system \eqref{eq:system}, and suppose that a NN trained as discussed above is utilized in the control loop to imitate the behavior of the control scheme \eqref{eq:OptimizationProblemMain}. The following theorem studies the theoretical properties of the resulting closed-loop system. 
 
\begin{theorem}\label{theorem:NNControl}
Consider system \eqref{eq:system}, and suppose that a NN trained as mentioned above is utilized in the control loop. Then, given $r\in\mathcal{R}$ and a sufficiently large $\theta$, the set $\Phi=\{x|\left\Vert x-\bar{x}_r\right\Vert\leq\sigma\}$ is attractive and positively invariant, for some $\sigma\in\mathbb{R}_{>0}$. 
\end{theorem}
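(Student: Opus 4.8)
The plan is to establish the result via a Lyapunov argument built around $V(x,r,P)=\|x-\bar{x}_r\|_P$, adapting the reasoning behind the two theorems of \cite{onestep2024} so as to account for the two additional error sources introduced by our pipeline: the suboptimality of the NOM solver and the approximation error of the NN trained on the NOM-generated dataset. Let $u_{\text{NN}}(t)$ and $P_{\text{NN}}(t)$ denote the control input and Lyapunov matrix produced by the trained NN at time $t$, and let $u^\ast(t),P^\ast(t)$ be the exact minimizers of \eqref{eq:OptimizationProblemMain}. First I would quantify two bounds: a NOM bound $\|u_{\text{NOM}}-u^\ast\|+\|P_{\text{NOM}}-P^\ast\|\leq\epsilon_{\text{NOM}}$ capturing how far the gradient-descent minimizer of the loss \eqref{eq:NOMLoss} lies from the true optimum, and a generalization bound $\|u_{\text{NN}}-u_{\text{NOM}}\|+\|P_{\text{NN}}-P_{\text{NOM}}\|\leq\epsilon_{\text{NN}}$ valid over $\mathcal{X}\times\mathcal{R}$ once the grid is fine enough and training has converged. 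Composing these yields a single uniform bound $\varepsilon:=\epsilon_{\text{NOM}}+\epsilon_{\text{NN}}$ on the distance between the NN output and the exact optimizer.

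Next I would propagate this bound through the dynamics. The actual successor state is $x(t+1)=f(x(t))+g(x(t))u_{\text{NN}}(t)$, whereas the optimizer was certified against the linearized predictor $x^+=A_tx(t)+B_tu^\ast(t)$. Writing $x(t+1)-x^+$ as the sum of the linearization residual $f(x(t))-A_tx(t)$ (bounded by $\delta$) and the input perturbation $g(x(t))\big(u_{\text{NN}}(t)-u^\ast(t)\big)$ (bounded using $\sup_{x\in\mathcal{X}}\|g(x)\|$ together with the $\mu_g$-Lipschitz property and $\varepsilon$), I would bound $\|x(t+1)-x^+\|$ by a constant $\rho:=\delta+c_g\varepsilon$. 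Since $V$ is a weighted norm, the triangle inequality gives $V(x(t+1),r,P_{\text{NN}})\leq V(x^+,r,P_{\text{NN}})+\sqrt{\bar{\lambda}_P}\,\rho$, and an analogous Lipschitz-in-$P$ estimate absorbs the discrepancy between $P_{\text{NN}}$ and $P^\ast$ into $\varepsilon$. Invoking the contractive constraint \eqref{eq:Constraint3}, which the exact optimizer satisfies, then yields the approximate decrease $V(x(t+1),r,P_{\text{NN}})-V(x(t),r,P_{\text{NN}})\leq-\theta\|x(t)-\bar{x}_r\|+\kappa$ for a constant $\kappa$ collecting $\sqrt{\bar{\lambda}_P}\,\rho$ and the $\varepsilon$-dependent slack.

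From this one-step inequality both claims follow. For attractiveness, whenever $\|x(t)-\bar{x}_r\|>\kappa/\theta$ the right-hand side is strictly negative, so $V$ -- and hence the distance to $\bar{x}_r$ -- strictly decreases, driving the trajectory toward $\Phi$ with $\sigma$ chosen of order $\kappa/\theta$. For positive invariance, for $x(t)\in\Phi$ I would bound the one-step growth of $\|x-\bar{x}_r\|$ directly and show that, for $\theta$ sufficiently large, the decrease term dominates the bounded errors so that $x(t+1)$ cannot exit the set; this is precisely where the "sufficiently large $\theta$" hypothesis is used, mirroring its role in \eqref{eq:TrackingPerformance1}. Converting between $V$-sublevel sets and the Euclidean ball $\Phi$ relies on $\lambda_{\min}(P_{\text{NN}})\|x-\bar{x}_r\|^2\leq V(x,r,P_{\text{NN}})^2\leq\lambda_{\max}(P_{\text{NN}})\|x-\bar{x}_r\|^2$, so a uniform positive lower bound on $\lambda_{\min}(P_{\text{NN}}(t))$ is needed -- guaranteed by the $G_2$ penalty in \eqref{eq:NOMLoss} keeping the NOM (and hence NN) matrices positive definite.

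The main obstacle I anticipate is controlling the constraint slack rigorously: because the NOM enforces \eqref{eq:Constraint2}--\eqref{eq:Constraint3} softly through the ReLU penalties $G_1,G_2$ rather than as hard constraints, I must show that at a converged minimizer of \eqref{eq:NOMLoss} these penalties are small enough that the contractive inequality holds up to an $O(\varepsilon)$ residual, and that $P_{\text{NN}}(t)$ stays uniformly positive definite along the closed-loop trajectory. Establishing a bound that is uniform in $t$ -- rather than merely pointwise -- and ensuring that $\bar{\lambda}_P$ and $\lambda_{\min}(P_{\text{NN}})$ remain well-defined and bounded away from degeneracy is the delicate part that ties the soft-constraint NOM formulation back to the hard-constraint stability guarantee.
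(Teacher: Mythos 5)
Your proposal follows essentially the same route as the paper's proof: write the NN output as the exact optimizer of \eqref{eq:OptimizationProblemMain} plus a bounded approximation error, propagate that error through the weighted-norm Lyapunov function $V$ via the triangle inequality, invoke the one-step contraction that the exact optimizer satisfies (the result from \cite{onestep2024}), and conclude $\Delta V<0$ outside a ball whose radius scales like the accumulated errors divided by $\theta$. The only notable bookkeeping difference is that you absorb the state-dependent perturbation terms into a single constant $\kappa$ (which needs $g$ bounded on $\mathcal{X}$), whereas the paper keeps them proportional to $\Vert x-\bar{x}_r\Vert$ so that they subtract from the effective contraction rate --- which is precisely where the ``sufficiently large $\theta$'' hypothesis enters in both arguments; your added concerns about the soft-constraint slack and the uniform positive definiteness of the NN's $P$ are refinements the paper silently subsumes into the bounded-error assumption rather than a different proof strategy.
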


\begin{proof}
Let $\Delta V(t):=V\big(x(t+1),r,\tilde{P}(t+1)\big)-V\big(x(t),r,\tilde{P}(t)\big)$, where $\tilde{P}(t+1)=P^\ast(t+1)+\Delta P(t+1)$ and $\tilde{P}(t)=P^\ast(t)+\Delta P(t)$ are the output of the NN at time instants $t+1$ and $t$, respectively, with $\Delta P(t+1)$ and $\Delta P(t)$ being the NN's approximation error at corresponding time instants. According to the definition of the Lyapunov function $V$, we have:
\begin{align}\label{eq:DVNN1}
\Delta V(t)=&\left\Vert f\left(x(t)\right)+g\left(x(t)\right)\tilde{u}(t)-\bar{x}_r\right\Vert_{\tilde{P}(t+1)}\nonumber\\
&-\left\Vert x(t)-\bar{x}_r\right\Vert_{\tilde{P}(t)},
\end{align}
where $\tilde{u}(t)=u^\ast(t)+\Delta u(t)$ is the output of the NN at time instant $t$, with $\Delta u(t)$ being the NN's error in approximating the optimal control input $u^\ast(t)$. According to the triangle inequality, \eqref{eq:DVNN1} implies that:
\begin{align}\label{eq:DVNN2}
\Delta V(t)\leq&\left\Vert f\left(x(t)\right)+g\left(x(t)\right)u^\ast(t)-\bar{x}_r\right\Vert_{\tilde{P}(t+1)}\nonumber\\
&+\left\Vert g\left(x(t)\right)\Delta{u(t)}\right\Vert_{\tilde{P}(t+1)}-\left\Vert x(t)-\bar{x}_r\right\Vert_{\tilde{P}(t)}. 
\end{align}

From \eqref{eq:DVNN2}, it can be concluded that\footnote{Given $z\in\mathbb{R}^n$ and $Q_1,Q_2\in\mathbb{R}^{n\times n}$, let $\left\Vert z\right\Vert_{Q_1}=\sqrt{\left\vert z^\top Q_1z\right\vert}$, $\left\Vert z\right\Vert_{Q_2}=\sqrt{\left\vert z^\top Q_2z\right\vert}$, and $\left\Vert z\right\Vert_{Q_1+Q_2}=\sqrt{\left\vert z^\top (Q_1+Q_2)z\right\vert}$. Thus, it can be easily shown that $\left\Vert z\right\Vert_{Q_1}-\left\Vert z\right\Vert_{Q_2}\leq\left\Vert z\right\Vert_{Q_1+Q_2}\leq\left\Vert z\right\Vert_{Q_1}+\left\Vert z\right\Vert_{Q_2}$. Also, $\left\Vert z\right\Vert_{Q_1}\leq\sqrt{\left\Vert Q_1\right\Vert}\left\Vert z\right\Vert$ and $\left\Vert z\right\Vert_{Q_2}\leq\sqrt{\left\Vert Q_2\right\Vert}\left\Vert z\right\Vert$.}:
\begin{align}\label{eq:DVNN3}
\Delta V(t)\leq&\left\Vert f\left(x(t)\right)+g\left(x(t)\right)u^\ast(t)-\bar{x}_r\right\Vert_{P^\ast(t+1)}\nonumber\\
&+\sqrt{\bar{\Delta{P}}}\left\Vert f\left(x(t)\right)+g\left(x(t)\right)u^\ast(t)-\bar{x}_r\right\Vert\nonumber\\
&+\left\Vert g\left(x(t)\right)\Delta{u(t)}\right\Vert_{\tilde{P}(t+1)}\nonumber\\
&-\left\Vert x(t)-\bar{x}_r\right\Vert_{P^\ast(t)}+\sqrt{\bar{\Delta{P}}}\left\Vert x(t)-\bar{x}_r\right\Vert, 
\end{align}
where $\bar{\Delta P}=\sup_{t\geq0}\left\Vert\Delta P(t)\right\Vert$. Since $\big\Vert f\left(x(t)\right)+g\left(x(t)\right)u^\ast(t)-\bar{x}_r\big\Vert_{P^\ast(t+1)}-\left\Vert x(t)-\bar{x}_r\right\Vert_{P^\ast(t)}\leq3\sqrt{\bar{\lambda}_P}\delta-\theta\left\Vert x(t)-\bar{x}_r\right\Vert$ (see \cite[Theorem 2]{onestep2024}), it follows from \eqref{eq:DVNN3} that:
\begin{align}\label{eq:DVNN4}
\Delta V(t)\leq&{3\sqrt{\bar{\lambda}_P}\delta}-\theta\left\Vert x(t)-\bar{x}_r\right\Vert+\sqrt{\bar{\Delta{P}}}\left\Vert x(t)-\bar{x}_r\right\Vert\nonumber\\
&+\left\Vert g\left(x(t)\right)\Delta{u(t)}\right\Vert_{\tilde{P}(t+1)}\nonumber\\
&+\sqrt{\bar{\Delta{P}}}\left\Vert f\left(x(t)\right)+g\left(x(t)\right)u^\ast(t)-\bar{x}_r\right\Vert,
\end{align}
where $\bar{\lambda}_P$ and $\delta$ are as in \eqref{eq:TrackingPerformance1}. 

At this stage, following arguments similar to \cite{onestep2024}, it can be shown that:
\begin{align}
&\left\Vert g\left(x(t)\right)\Delta{u(t)}\right\Vert_{\tilde{P}(t+1)}\leq\left(\sqrt{\bar{\lambda}_P}+\sqrt{\bar{\Delta{P}}}\right)\left\Vert g(\bar{x}_r)\right\Vert\Delta\bar{u}\nonumber\\
&+\left(\sqrt{\bar{\lambda}_P}+\sqrt{\bar{\Delta{P}}}\right)\mu_g\Delta\bar{u}\left\Vert x(t)-\bar{x}_r\right\Vert,\label{eq:UpperBound1}
\end{align}
and
\begin{align}
&\sqrt{\bar{\Delta{P}}}\left\Vert f\left(x(t)\right)+g\left(x(t)\right)u^\ast(t)-\bar{x}_r\right\Vert\leq\sqrt{\bar{\Delta{P}}}\delta\nonumber\\
&+\sqrt{\bar{\Delta{P}}}\frac{\sqrt{\bar{\lambda}_P}}{\sqrt{\underline{\lambda}_P}}\left\Vert x(t)-\bar{x}_r\right\Vert,\label{eq:UpperBound2}
\end{align}
where $\underline{\lambda}_P:=\inf_{t\geq0}\lambda_{\text{min}}\left(P^\ast(t)\right)$ ($\underline{\lambda}_P\in\mathbb{R}_{>0}$), and $\bar{\Delta u}=\sup_{t\geq0}\left\Vert\Delta u(t)\right\Vert$.

Combining \eqref{eq:DVNN4}, \eqref{eq:UpperBound1}, and \eqref{eq:UpperBound2} yields:
\begin{align}               
\Delta{V(t)}\leq&{3\sqrt{\bar{\lambda}_P}\delta}+\sqrt{\bar{\Delta{P}}}\delta\nonumber\\
&+\left(\sqrt{\bar{\lambda}_P}+\sqrt{\bar{\Delta{P}}}\right)\left\Vert g(\bar{x}_r)\right\Vert\Delta\bar{u}\nonumber\\
&-\Bigg(\theta-\sqrt{\bar{\Delta{P}}}-\frac{\sqrt{\bar{\Delta{P}}}\sqrt{\bar{\lambda}_P}}{\sqrt{\underline{\lambda}_P}}\nonumber\\
&-\left(\sqrt{\bar{\lambda}_P}+\sqrt{\bar{\Delta{P}}}\right)\mu_g\Delta\bar{u}\Bigg)\left\Vert x(t)-\bar{x}_r\right\Vert.
\end{align}

Thus, by selecting $\theta>\sqrt{\bar{\Delta{P}}}+\frac{\sqrt{\bar{\Delta{P}}}\sqrt{\bar{\lambda}_P}}{\sqrt{\underline{\lambda}_P}}+\big(\sqrt{\bar{\lambda}_P}+\sqrt{\bar{\Delta{P}}}\big)\mu_g\Delta\bar{u}$, it is concluded that $\Delta{V(t)}<0$ whenever $\left\Vert x(t)-\bar{x}_r\right\Vert>\sigma$, where
\begin{align}\label{eq:sigma}
\sigma:=\frac{{3\sqrt{\bar{\lambda}_P}\delta}+\sqrt{\bar{\Delta{P}}}\delta+(\sqrt{\bar{\lambda_p}}+\sqrt{\bar{\Delta{P}}})\left\Vert g(\bar{x}_r)\right\Vert\Delta\bar{u}}{\theta-\sqrt{\bar{\Delta{P}}}-\frac{\sqrt{\bar{\Delta{P}}}\sqrt{\bar{\lambda_P}}}{\sqrt{\underline{\lambda}_P}}-(\sqrt{\bar{\lambda_p}}+\sqrt{\bar{\Delta{P}}})\mu_g\Delta\bar{u}}.
\end{align}

Therefore, since for any given $r\in\mathcal{R}$, $\left\Vert g(\bar{x}_r)\right\Vert$ is bounded, the set $\Phi=\{x|\left\Vert x-\bar{x}_r\right\Vert\leq\sigma\}$ is attractive and positively invariant, where $\sigma$ is as in \eqref{eq:sigma}; this completes the proof. 
\end{proof}

\begin{remark}
Since $\theta$ is a design parameter, $\sigma$ given in \eqref{eq:sigma} can be made arbitrarily small. Also, according to \eqref{eq:sigma}, in the presence of an ideal NN (i.e., $\bar{\Delta u}=\bar{\Delta P}=0$), the properties of the main controller (i.e., inequality \eqref{eq:TrackingPerformance1}) are recovered. 
\end{remark}

\begin{remark}
As discussed above, $\theta$ must exceed a threshold determined by the network's approximation errors. To achieve this, the process begins with training a NN to approximate the control input and the Lyapunov matrix. The maximum approximation errors are then computed, and $\theta$ is adjusted accordingly. The NN is subsequently re-trained with the updated $\theta$ value. This procedure is repeated until $\theta$ satisfies the specified inequality.
\end{remark}

\section{Simulation Study}\label{sec:SIM}
% This section evaluates the effectiveness of the proposed NOM in solving the optimization problem \eqref{eq:OptimizationProblemMain} and generating a decent training dataset.  

Consider the following nonlinear system:
\begin{subequations}\label{eq:sim_nonlinear}
\begin{align}
\dot{x}_1&=x_2,\\
\dot{x}_2&=x_1^3+(x_2^2+1)u,
\end{align}
\end{subequations}
which can be discretized using Euler's method as follows:
\begin{subequations}\label{eq:sim_nonlinear}
\begin{align}
x_1(t+1)&=x_1(t)+\Delta T x_2(t),\\
x_2(t+1)&=x_2(t)+\Delta T\big(x_1(t)^3+ (x_2(t)^2+1)\big)u(t),
\end{align}
\end{subequations}
where $\Delta T=0.1$ seconds is the sampling period. We assume that $\mathcal{X}=[-5,5]\times[-5,5]$ and $\mathcal{R}=0$, and we set $N_x=10,201$ (i.e., $|\mathbb{D}_{training}|=10,201$). Also, we set $\theta=0.1$, $Q_x=\text{diag}\{10,0.1\}$, and $Q_u=1$. For each data point, we use 15 NOM starting points (i.e., $S=15$). The NOM for all starting points is trained in 2000 epochs with a learning rate of 0.01. For comparison purposes, we also use YALMIP toolbox \cite{Lofberg2004} with \texttt{fmincon} solver to solve \eqref{eq:OptimizationProblemMain} for each data point. This experiment is conducted on $\text{Intel}^{\text{\textregistered}}$ $\text{Core}^{\text{\texttrademark}}$ i5-7300HQ processor with 24GB RAM, using a GeForce GTX 1050 Ti graphic to accelerate the NOM computations.

TABLE \ref{tab:pfm} compares NOM and YALMIP in solving the optimization problem \eqref{eq:OptimizationProblemMain} for system \eqref{eq:sim_nonlinear}, where Control Effort (CE) is defined as $\text{CE}=\sum\left\vert u(t)\right\vert$ and the CE with YALMIP is used as the basis for normalization. As seen in this table, YALMIP leads to infeasibility in 46.31\% of instances, despite the fact that the problem is feasible. Also, YALMIP terminates prematurely and provides an infeasible solution in 18.29\% of instances. TABLE \ref{tab:pfm} reveals that YALMIP increases the CE by {431.91\%}, as it cannot solve the optimization problem \eqref{eq:OptimizationProblemMain} effectively.

We also compare the performance of the NN-based control scheme where the NN is trained on the dataset generated by NOM and YALMIP. The NN structure for both datasets has 6 hidden layers with 8, 32, 64, 64, 32, and 16 neurons. For training the NNs, we consider the MSE loss function and use the Adam optimizer with a learning rate of 0.001; both NNs converge in 10,000 epochs. For comparison purposes, we also implement the iterative LQR detailed in \cite{Prasad2014}.

Starting from the initial condition $x=[1~0]^\top$,  Fig. \ref{fig:timeprofile_sim} presents the time-profile of states and control inputs with all control schemes. As seen in this figure, the NN-based control scheme developed based on the dataset generated by NOM provides a comparable performance to the iterative LQR. Since YALMIP does not provide a good training dataset, the NN-based control scheme developed based on the YALMIP-generated dataset yields a poor performance.

% {\color{blue}

% Besides, we evaluated control ability of a NN controller which is trained from both NOM-solved dataset and YALMIP-solved dataset, then compared results with traditional control method LQR. The selected NN structure has 6 hidden layers with 8, 32, 64, 64, 32, and 16 neurons for both datasets. Noted that though we didn't claim this structure is the optimal NN structure, it satisfies control performances from our experiments. The specific number of layers and neurons are determined following procedure in \cite{onestep2024}. For each dataset, the NNs are trained with learning rate 0.001 under 10000 epochs. We select MSE loss function and Adam optimizer in both training process. To evaluate control performance of trained NNs, nonlinear system \eqref{eq:sim_nonlinear} is set with a initial state $x=[1,0]^\top$ given a reference $r=[0, 0]^\top$. State portrait in Fig. \ref{fig:timeprofile_sim} shows the well-trained NN with NOM dataset outperforms both NN trained by YALMIP dataset and LQR method. One understandable reason is that NOM dataset has less local minizas compared with YALMIP dataset. In addition, cost function \eqref{eq:OptimizationProblemMain} considers optimal solutions for every step while LQR considers the finite horizon iteratively.

\begin{table}[!t]
    \normalsize
    \centering
    \caption{Comparing the Performance of NOM and YALMIP in solving \eqref{eq:OptimizationProblemMain} for System \eqref{eq:sim_nonlinear}.}\label{tab:pfm}
    \begin{tabular}{c|c|c|c}
        \hline
        Method & Infeasibility  & Violation & CE\\
        & ($\%$) & ($\%$) & (Norm.) \\
        \hline
        NOM & 0 & 0  & 0.188\\
        \hline
        YALMIP & 46.31 & 18.29 & 1 \\
        \hline
    \end{tabular}
\end{table}

\begin{figure}[!t]
    \centering
    \includegraphics[width=8.5cm]{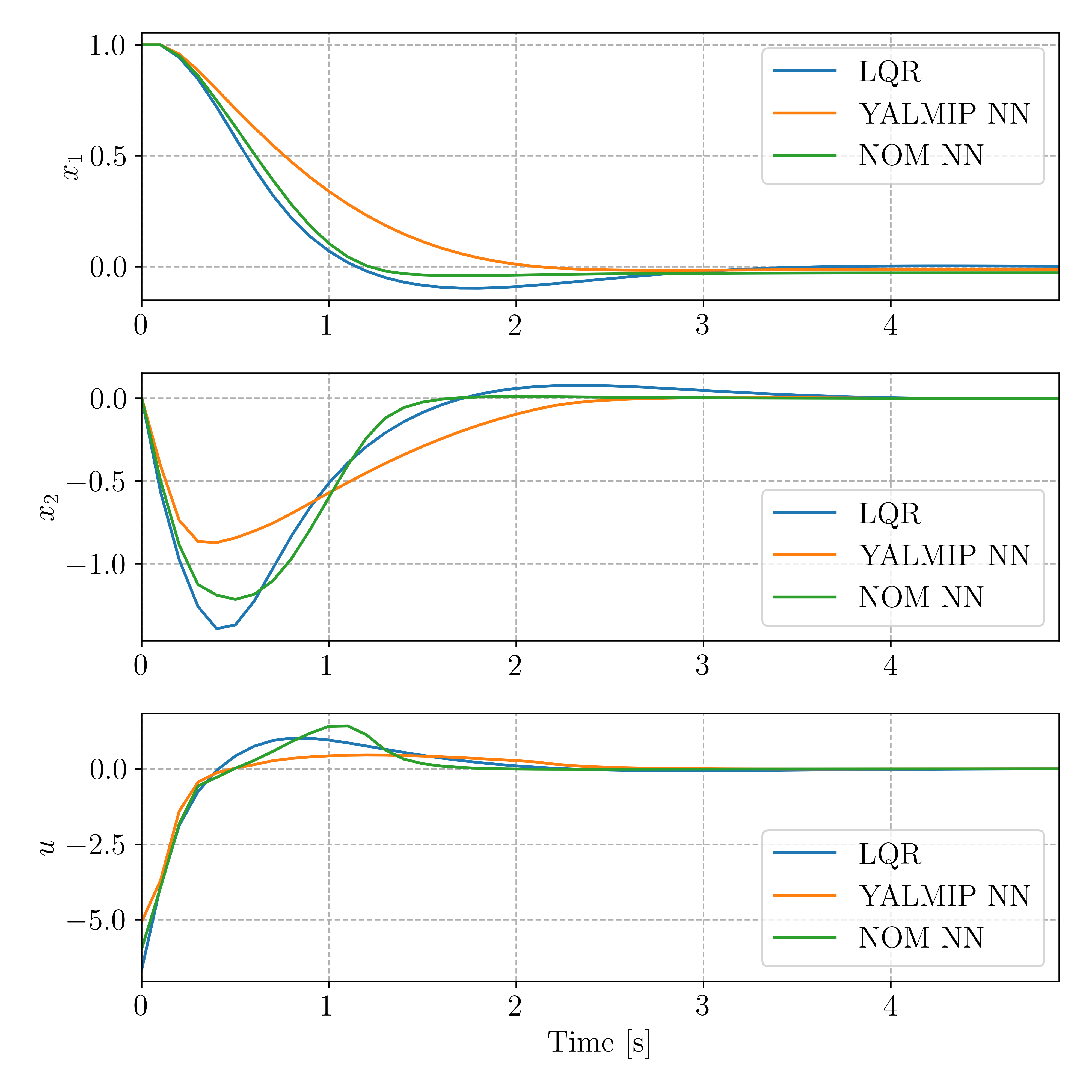}
    \caption{Time-profile of states and control input with NN-based controller developed based on the NOM-generated dataset (indicated by NOM NN), NN-based controller developed based on the YALMIP-generated dataset (indicated by YALMIP NN), and iterative LQR \cite{Prasad2014} (indicated by LQR).}
    \label{fig:timeprofile_sim}
\end{figure}

\section{Experimental Results}\label{sec:EXPRIMENT}
This section uses the proposed methodology to control the position of a Parrot Bebop 2 drone, whose dynamics are reported in \cite{AmiriMECC}. The training set $\mathbb{D}_{\text{training}}$ is collected for X, Y, and Z directions by following the procedure detailed in \cite{onestep2024}, where  NOM is used to solve the optimization problem \eqref{eq:OptimizationProblemMain}. Note that the dynamics along X, Y, and Z directions are decoupled, allowing us to train NN for each direction separately. For all directions, we set $Q_x=\text{diag}\{20,0.1\}$, $Q_u=0.1$, and $\theta=0.01$. We use a feedforward NN with 6 hidden layers, and 8, 32, 64, 64, 32, and 16 neurons in the hidden layers.

Our experimental setup is shown in Fig. \ref{fig:network}. We use \texttt{OptiTrack} system with ten \texttt{Prime$^\text{x}$ 13} cameras operating at a frequency of 120 Hz; these cameras provide a 3D accuracy of $\pm0.02$ millimeters which is acceptable for identification purposes. The computing unit is a $13^{\text{th}}$ Gen $\text{Intel}^{\text{\textregistered}}$ $\text{Core}^{\text{\texttrademark}}$ i9-13900K processor with 64GB RAM and one GeForce GTX 3090 graphic card, on which the software \texttt{Motive} is installed to analyze and interpret the camera data. We use the ``Parrot Drone Support from MATLAB" package \cite{MATLAB}, and send the control commands to the Parrot Bebop 2 via WiFi and by using the command \texttt{move($\cdot$)}. It should be mentioned that the communication between \texttt{Motive} and MATLAB is established through User Datagram Protocol (UDP) communication using the \texttt{NatNet} service.

For comparison purposes, in addition to running networks trained on the NOM-generated  dataset, we implement networks trained on the YALMIP-generated dataset; we also implement the iterative LQR reported in \cite{Prasad2014}. Fig. \ref{fig:timeprofile_drone} presents the results with all three schemes. In X and Y directions, both NN-based control schemes outperform the iterative LQR \cite{Prasad2014}. In Z direction, the iterative 
LQR provides a better solution in comparison with the NN-based control schemes; this observation is reasonable, as the linearization error $\delta$ is small for Z direction. In Z direction, the NN-based control scheme developed based on the YALMIP-generated dataset yields the poorest performance. 

\begin{figure}[!t]
    \centering
    \includegraphics[width=8.5cm]{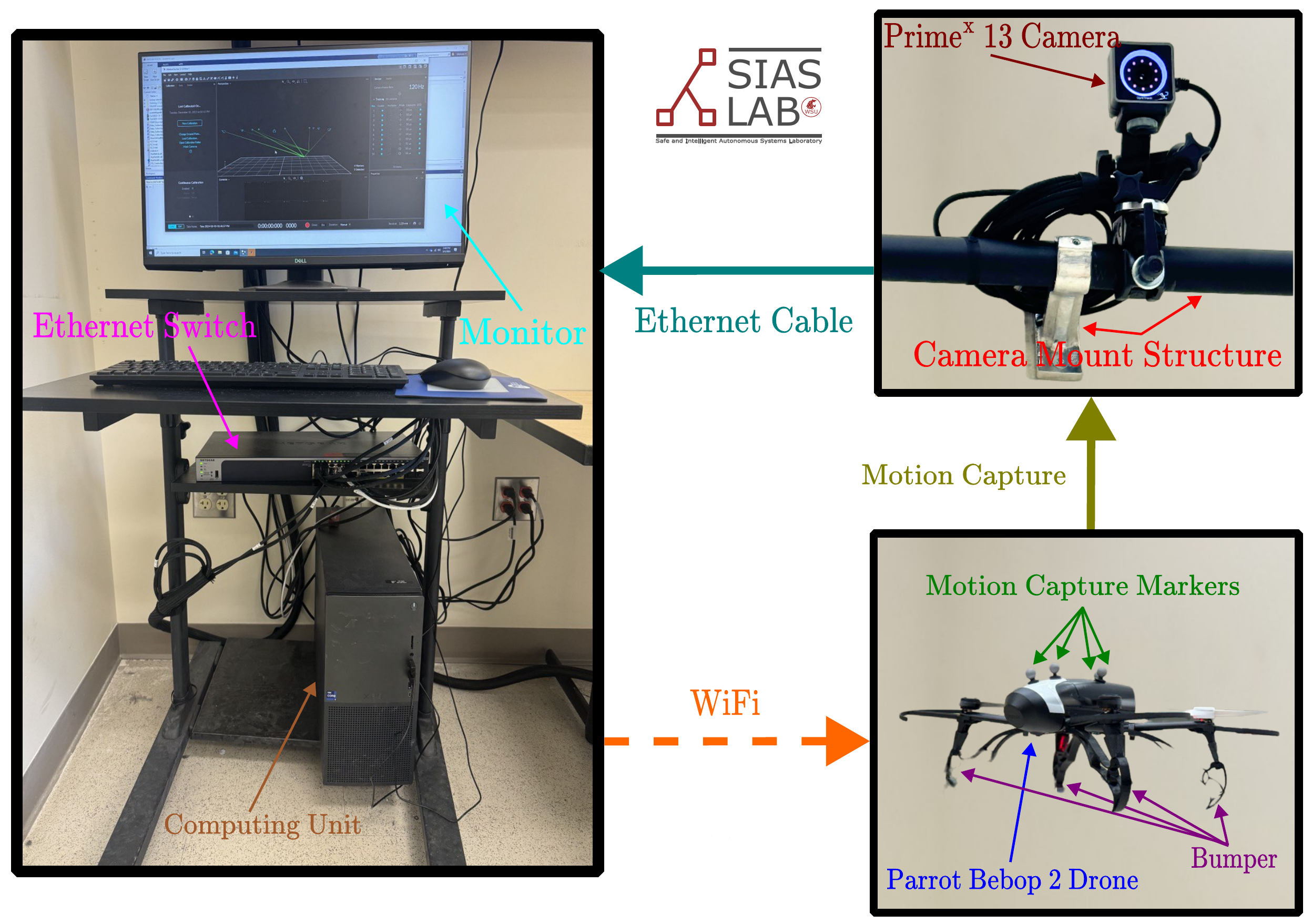}
    \caption{Overview of the experimental setup utilized to experimentally validate the proposed NN-based control scheme.}
    \label{fig:network}
\end{figure}

\begin{figure}[!t]
    \centering
    \includegraphics[width=8.5cm]{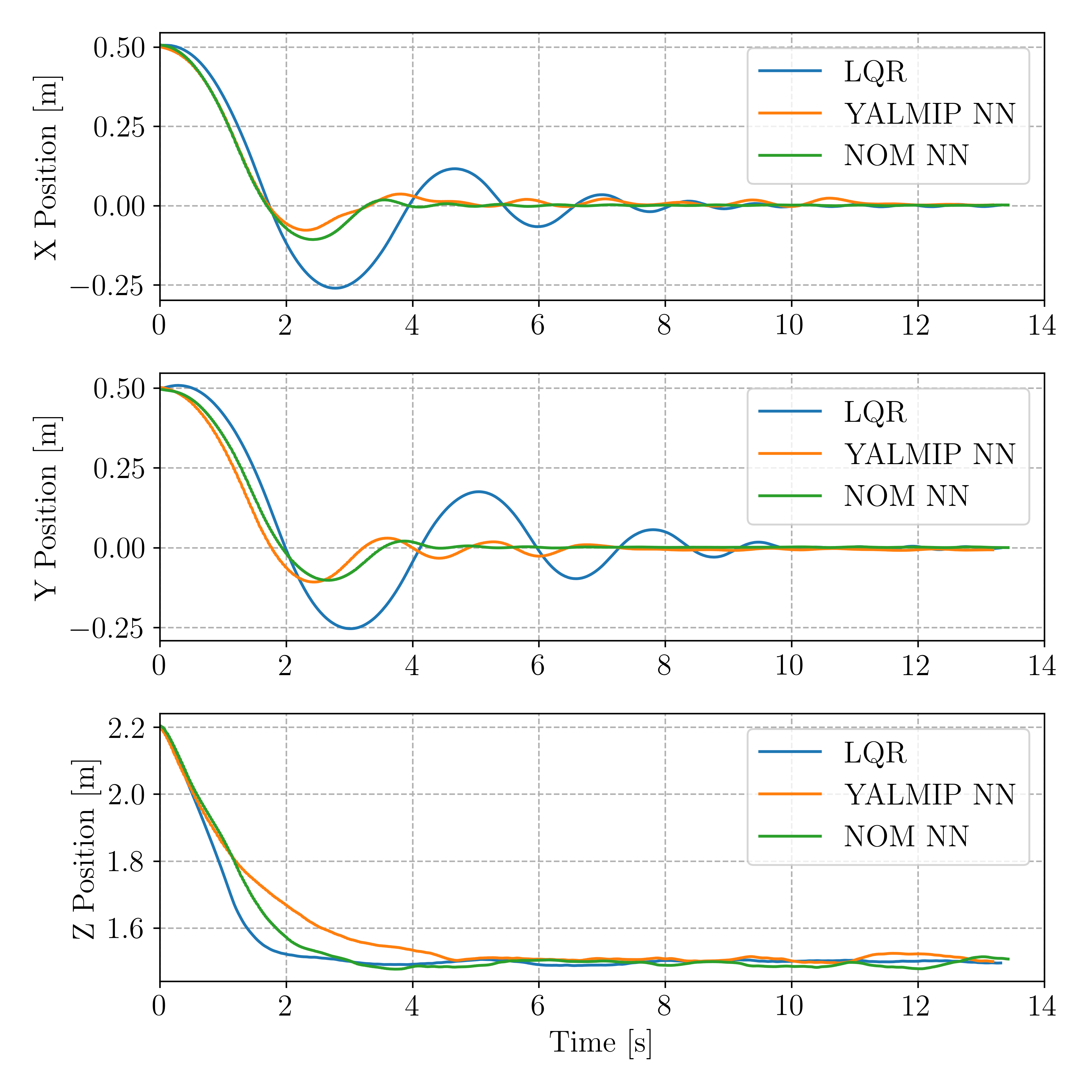}
    \caption{Time-profile of Parrot Bebop 2 with NN-based controller developed based on the NOM-generated dataset (indicated by NOM NN), NN-based controller developed based on the YALMIP-generated dataset (indicated by YALMIP NN), and iterative LQR \cite{Prasad2014} (indicated by LQR).}
    \label{fig:timeprofile_drone}
\end{figure}

\section{Conclusion}\label{sec:Con}
Prior work has introduced a provably-stable NN-based control scheme for the optimal control of nonlinear systems, where the NN in the control loop emulates a one-step-ahead predictive control policy. Despite its potential, applying this approach to real-world systems can be challenging due to optimization difficulties. Specifically, generating a reliable training dataset for the NN involves solving a non-convex, nonlinear optimization problem that can easily exceed the capabilities of existing solvers. This paper addressed this issue by developing a NOM to solve the optimization problem and generate a reliable and comprehensive training dataset. This paper discussed the NOM structure and its training details, and examined the theoretical properties of the closed-loop system where the NN, trained on the NOM-generated dataset, is used in the control loop. Simulation and experimental studies were presented to demonstrate the effectiveness of the proposed methodology.

\section*{Data Availability}
All the cade and data used in simulation and experimental studies are available at \url{https://github.com/anran-github/A-Guaranteed-Stable-Neural-Network-Approach-for-Optimal-Control-of-Nonlinear-Systems.git}. 

\balance
\bibliographystyle{IEEEtran}
\bibliography{references}{}

\end{document}